\newcommand{\E}{\mathsf{E}}
\newcommand{\KGL}{\mathsf{KGL}}
\newcommand{\KQ}{\mathsf{KQ}}
\newcommand{\GW}{\mathsf{GW}}
\newcommand{\Th}{\mathsf{Th}}
\newcommand{\Z}{\mathbb{Z}}
\newcommand{\N}{\mathbb{N}}
\newcommand{\R}{\mathbb{R}}
\newcommand{\FF}{\mathbb{F}}
\newcommand{\unit}{\mathbf{1}}
\newcommand{\SH}{\mathbf{SH}}
\newcommand{\A}{\mathbf{A}}
\renewcommand{\P}{\mathbf{P}}
\newcommand{\HGr}{\mathbf{HGr}}
\newcommand{\HP}{\mathbf{HP}}
\newcommand{\SL}{\mathrm{SL}}
\newcommand{\Sing}{\mathrm{Sing}}
\newcommand{\KSp}{\mathrm{KSp}}
\newcommand{\Spec}{\mathrm{Spec}}
\newcommand{\Forg}{\mathsf{Forg}}
\newcommand{\Hyper}{\mathsf{Hyp}}
\let\smash=\wedge
\author{K.~Arun Kumar and Oliver R\"ondigs}
\title{A cellular absolute motivic ring spectrum representing Hermitian $K$-theory}
\newtheorem{theorem}{Theorem}[section]
\newtheorem*{theorem*}{Theorem}
\newtheorem{lemma}[theorem]{Lemma}
\newtheorem*{proposition*}{Proposition}
\newtheorem*{corollary*}{Corollary}
\newtheorem{defn*}{Definition}
\theoremstyle{definition}
\newtheorem{remark}[theorem]{Remark}
\newtheorem{remark*}{Remark}
\begin{document}
\maketitle
\begin{abstract}
  In the Morel-Voevodsky motivic stable homotopy category of a quasi-compact quasi-separated scheme $S$, several candidates exist for a motivic spectrum representing hermitian $K$-theory.
  This note shows that the cellular absolute motivic spectrum constructed
  in \cite{kumar.thesis} via the geometry of orthogonal
  and hyperbolic Grassmannians over $S$ coincides with the
  motivic ring spectrum constructed in \cite{chn}.
\end{abstract}

\section{Introduction}

With his creation of higher algebraic $K$-theory, Daniel Quillen 
constructed an interesting cohomology theory of schemes. The
Morel-Voevodsky $\P^1$-stable $\A^1$-homotopy theory provides
a natural home for this cohomology theory over regular
schemes, similar to the classical stable homotopy category hosting
complex topological $K$-theory.
More precisely, higher algebraic $K$-groups
of any smooth $S$-scheme $X$ are
represented by a motivic ring spectrum $\KGL_S$ in the
motivic stable homotopy category $\SH(S)$ 
for any Noetherian regular
scheme $S$ \cite{voevodsky.icm}, \cite{ppr}, \cite{rso.strict}.
Questions around algebraic $K$-theory, such as the
existence of the motivic spectral sequence analogous to the Atiyah-Hirzebruch
spectral sequence for complex topological $K$-theory, were a major driving force
for motivic stable homotopy theory.

The motivic spectrum $\KGL_S$
is constructed via the infinite Grassmannian, the union of the finite Grassmannians of $d$-dimensional linear subspaces of a $c+d$-dimensional affine space $\A^{c+d}_S$ over $S$.
Since these are built in a suitable sense out of algebraic spheres over $S$, the
motivic spectrum $\KGL_S$ is cellular \cite{dugger-isaksen.cell}.
To explain the last adjective in the title, the
collection $\{\KGL_S\}$ of motivic spectra is absolute in the sense that
it pulls back along any morphism, that is, $f^\ast \KGL_S\simeq \KGL_R$ for any
morphism $f\colon R\to S$.

Hermitian $K$-theory, as conceived by Karoubi, adds the complexity of
suitable bilinear forms
to the vector bundles underlying algebraic $K$-theory. 
From the topological perspective, it is related to algebraic $K$-theory
in the same way as real topological
$K$-theory is related to  complex topological $K$-theory.
Under the additional restriction that $2$ be invertible in the regular affine
Noetherian base scheme $S$,
a motivic spectrum in $\SH(S)$ representing Karoubi's hermitian $K$-theory
(also known as higher Grothendieck-Witt theory)
was provided in \cite{hornbostel}. However, the restriction on the
invertibility of $2$ prevents its absoluteness, and several authors have
worked on extending the theory, in particular to the terminal scheme
$\Spec(\Z)$  \cite{zbMATH07124513}, \cite{spitzweck.herm}.

The first author, in a 2020 PhD thesis \cite{kumar.thesis}
supervised by the second author, obtained a cellular absolute
motivic spectrum
$\KQ_S^\prime$ over any scheme via a geometric description. 
The essential parts of the thesis have been published in \cite{kumar}.
If $2$ is invertible in the Noetherian regular scheme
$S$, it represents Karoubi's hermitian $K$-theory.
However, a ring structure, obtained previously in \cite{lopez.thesis},
as well as several structural properties, were missing, as well as an
idenfication of what this motivic spectrum represents over regular
Noetherian bases in which $2$ is not invertible.

In 2024, \cite{chn} provided an absolute motivic ring spectrum $\KQ_S$
over any quasi-compact quasi-separated scheme $S$, with
further desirable properties, such as absolute purity and twisted Thom
isomorphisms. Their work is based on further deep results on hermitian $K$-theory
in great generality, similar to \cite{spitzweck.herm}, which is partly in progress.
A geometric description, known for regular bases where $2$ is invertible at least
since Girja Tripathi's thesis, is missing from this impressive
list of properties.
Theorem~\ref{thm:chn-kumar} below provides that the two motivic spectra are
equivalent over any quasi-compact quasi-separated scheme.
As one consequence, work by the second author with Kolderup and {\O}stv{\ae}r
extends slice and coefficient computations to hermitian $K$-theory over
Dedekind domains, including $\Z$ \cite{kro.very}.
Applications to motivic stable homotopy groups of spheres will be pursued elsewhere.

To prove the equivalence we use two main results.
As a first result, the collaboration of Panin and Walter \cite{panin-walter.quat-borel, panin-walter.msp}
provides a description of $\KQ^{\ast+(\star)}(\HGr_\Z )$.
Using this description allows us to construct a map
\[ \phi\colon \Z\times\HGr_\Z\to \Omega^{\infty+(\infty)}\Sigma^{2+(2)}\KQ_{\Z}\] of motivic spaces.
Its source features $\HGr_\Z$, the quaternionic Grassmannian defined over $\Z$.
The target of $\phi$ is the infinite $\P^1$-loop space of the suitably shifted hermitian
$K$-theory spectrum over $\Z$.
We then show this is compatible with the structure maps of $\KQ^\prime_\Z$ and $\KQ_\Z$.
The resulting map $\phi\colon \KQ^\prime_\Z\to \KQ_\Z$ of motivic spectra is then an equivalence.
To prove this we use as a second main result, the Grothendieck-Witt groups defined in \cite{9.III} agree with the classical hermitian K-groups of Karoubi in positive degrees for polynomial rings over fields. This fact is proved in a recent preprint of Schlichting \cite{sch.sym.v.gensym}, but it follows rather directly from the results of \cite{9.III} as described below.

\subsubsection*{Notation}
Let $S$ be a qcqs (quasi-compact and quasi-separated) base scheme.
We denote by
$S^{t,w}=S^{t-w}\wedge \mathbb{G}_{m}^{w} = S^{t-w+(w)}$ the motivic sphere of topological degree $t$ and weight $w$.
The corresponding suspension functor on the stable motivic homotopy category $\SH(S)$ is denoted 
by $\Sigma^{t,w}=\Sigma^{t-w+(w)}$.
For any pointed motivic space $X$ with associated motivic suspension spectrum
$\Sigma^{\infty+(\infty)}X$ and any motivic spectrum $\E\in \SH(S)$, set
\[ \E^{t+(w)}(X)= [\Sigma^{\infty+(\infty)} X, \Sigma^{t+(w)}\E]_{\SH(S)} \]
as the corresponding group of homomorphisms in $\SH(S)$.
The notation ``$\Sigma^{\infty+(\infty)}$'' may be left out for simplicity.
In the particular case of motivic spheres, one associates to any motivic spectrum $\E\in\SH(S)$ the bigraded motivic homotopy group 
\[
\pi_{t,w}\E=
\pi_{t-w+(w)}\E=
[S^{t,w},\E]_{\SH(S)}
\]
for $t,w\in \Z$.
For example, 
the hermitian $K$-groups of $S$ can be defined as $\KQ_{t,w}(S)=\pi_{t,w}\KQ_S$.
In the literature, 
the notation $\GW^w_t$ is also used to denote hermitian $K$-groups; 
these are related to $\KQ_{t,w}$ via the indexing conventions visible in the identifications
\[
\GW^{w}_{t}(S)\cong [S^{t,0}, \Sigma^{(w)}\KQ_S]=\pi_{t-2w,-w}\KQ_S=\pi_{t-w-(w)}\KQ_S.
\]
For an affine scheme $X=\Spec(R)$, we use both $\KSp(X)$ and $\KSp(R)$ to denote the K-theory spectrum of the category of non-degenerate alternating (or, in the language of \cite{9.I}, genuine $(-1)$-even) forms on $X$.
As a consequence, $\KSp$ denotes the corresponding spectrum valued presheaf $X\mapsto \KSp(X)$, with $\KSp_n(X)$ and $\KSp_n(R)$ short for the homotopy groups $\pi_n \KSp(X)$.

\subsection*{Acknowledgements}
Our work was generously supported by the DFG
and the RCN Project no.~312472 ``Equations in Motivic Homotopy''.
We thank Daniel Marlowe and Marco Schlichting for helpful comments. 

\section{The two candidates}
\label{aandh}

Recent work achieved a satisfying status for Karoubi's hermitian $K$-theory (also known as higher Grothendieck-Witt theory), which until recently was available only over base schemes in which $2$ is invertible.

\begin{theorem}[\cite{chn}]\label{thm:chn}
Let $S$ be a qcqs scheme. 
There exists a motivic $E_{\infty}$ ring spectrum $\KQ\in \SH(S)$ with the following properties.
\begin{enumerate}
    \item There exists an element $\alpha:\Sigma^{4+(4)}\unit\to \KQ$ such that multiplication with $\alpha$ is an equivalence $\KQ \simeq \Sigma^{4+(4)}\KQ$.
    \item\label{item:wood-seq} There exists a ring map $\Forg\colon \KQ_S\to \KGL_S$ and a $\KQ_S$-module map $\Hyper\colon \KGL_S\to \KQ_S$ such that \[ \Sigma^{(1)} \KQ \xrightarrow{\eta\smash \KQ} \KQ \xrightarrow{\Forg} \KGL \xrightarrow{\Sigma^{1+(1)}\Hyper \circ \beta} \Sigma^{1+(1)}\KQ \] is a cofiber sequence.
    \item For every morphism $f\colon R\to S$ there exists a canonical equivalence $f^\ast \KQ_S\simeq \KQ_R$.
    \item For every closed embedding $i\colon R\to S$ of regular schemes of codimension $c$ and normal bundle $Ni$, the purity transformation $i^\ast\KQ_S\to \Th(Ni)\smash i^!\KQ_S$ is an equivalence.
    \item For every vector bundle $V\to S$ of rank $r$, there exists a Thom equivalence $\KQ_S\simeq \Th(V)\smash \KQ_S(\det V[-r])$.\footnote{The shifted line bundle in parentheses denotes an appropriate twist of $\KQ_S$.}
    \item If $S$ is regular and $2$ is invertible on $S$, $\KQ_S$ represents Karoubi's hermitian $K$-groups over $S$.
\end{enumerate}
\end{theorem}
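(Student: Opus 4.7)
The plan is to follow the strategy of \cite{chn}: realize $\KQ_S$ as the motivic spectrum representing the Grothendieck--Witt theory of perfect complexes on $S$ equipped with a symmetric Poincaré structure in the sense of \cite{9.I, 9.III}, and then verify the six listed properties in turn.

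First I would attach to each qcqs scheme $S$ the Poincaré $\infty$-category of perfect complexes with its symmetric Poincaré structure, and let $\GW(S)$ denote the associated Grothendieck--Witt spectrum; functoriality under derived pullback $f^\ast$ is then built in by construction. Next I would establish $\A^1$-invariance and Nisnevich descent for this presheaf of spectra on qcqs schemes, which requires substantial input, namely the localization, dévissage, and projective bundle formulas of \cite{9.III} extended beyond the case $\tfrac12\in\mathcal{O}_S$. Combined with a projective bundle formula, this produces the desired object $\KQ_S\in\SH(S)$. The $E_\infty$-ring structure descends from the symmetric monoidal tensor product of Poincaré $\infty$-categories, so that base-change compatibility (3) follows formally from functoriality of perfect complexes under $f^\ast$ together with compact generation.

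For property (1), Bott periodicity, I would construct $\alpha$ from the symplectic Bott class on the quaternionic projective line and promote the resulting map to an equivalence via the projective bundle formula. For (2), the Wood cofiber sequence, I would exploit the fundamental fiber sequence of Poincaré structures relating the genuine-symmetric and Borel-symmetric structures, which after $\P^1$-stabilization yields the desired triangle involving $\eta$, $\Forg$, and $\Hyper\circ\beta$. Properties (4) and (5) --- absolute purity and the Thom equivalence --- would follow from the six-functor formalism for Poincaré $\infty$-categories together with the twisting behaviour of line-bundle Poincaré structures under tensor product. Finally, (6) is obtained by comparing, under the hypothesis that $2$ is invertible on regular $S$, the Poincaré $\infty$-categorical $\GW$ with Karoubi's classical Grothendieck--Witt theory, reducing the statement to the identifications already established in \cite{hornbostel}.

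The main obstacle is the first step: establishing $\A^1$-invariance and Nisnevich descent for $\GW$ on qcqs schemes without the assumption $\tfrac12\in\mathcal{O}_S$. All classical proofs of $\A^1$-invariance for hermitian $K$-theory rely on dévissage together with the fundamental theorem for regular rings in which $2$ is invertible; handling singular bases and residue characteristic $2$ uniformly is precisely where the Poincaré $\infty$-categorical framework of \cite{9.I, 9.III} is essential, and checking that this framework actually delivers the required descent and homotopy-invariance in full generality is the technical heart of the argument.
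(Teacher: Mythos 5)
The theorem you are asked to prove is an import: it is attributed to \cite{chn} in its statement, and the paper's ``proof'' consists entirely of pointers to the precise results there (statement 1 is \cite[Remark 8.1.2]{chn}, statement 2 follows from \cite[Corollary 8.1.7, Remark 8.1.8]{chn}, statement 3 is \cite[Proposition 8.2.1]{chn}, statement 4 is \cite[Theorem 8.4.2]{chn}, statement 5 is \cite[Proposition 8.3.1]{chn}, and statement 6 combines \cite[Proposition B.2.2]{9.II} with \cite[Corollary 8.1.5]{chn}). You instead set out to reconstruct the internal argument of \cite{chn}. That is the same underlying mathematics, so your outline is not a ``different route''; but as a proof it is a roadmap whose hardest steps you explicitly leave open. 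In the context of this paper, the expected content is the citation list, not a re-derivation; if you do want to re-derive, the burden of proof sits exactly where you locate it, and an outline that defers ``the technical heart'' is not yet a proof.

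Beyond that structural point, a few places in your sketch are imprecise enough that the argument as written would fail. First, the Poincar\'e structure whose Grothendieck--Witt theory is $\A^1$-invariant is the homotopy ($-1$)-symmetric one (this is \cite[Theorem 6.3.1]{chn}, used later in the present paper), not the genuine symmetric structure, and even then $\A^1$-invariance and d\'evissage are only available over \emph{regular} Noetherian bases; for a general qcqs scheme $S$ the spectrum $\KQ_S$ is \emph{defined} by pullback from $\Spec(\Z)$, so your claim that base change (3) ``follows formally from functoriality of perfect complexes'' has the logic backwards --- absoluteness is built into the construction precisely because representability of $\GW$ is not available over singular bases. Second, for statement 6 you reduce to \cite{hornbostel}, but that only gives representability of Karoubi's theory when $2$ is invertible; one still needs the comparison between the Poincar\'e-categorical $\GW$ and Karoubi's classical Grothendieck--Witt theory, which is \cite[Proposition B.2.2]{9.II}. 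Third, the Wood cofiber sequence (2) does not come from comparing genuine-symmetric and Borel-symmetric structures; it is the motivic incarnation of the fundamental (Bott--Genauer) fiber sequence relating $\GW$, K-theory with its $C_2$-action, and L-theory, packaged in \cite[Corollary 8.1.7]{chn}. If your goal is to use this theorem downstream, cite it; if your goal is to prove it, each of these three points needs the actual argument, not a placeholder.
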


\begin{proof} 
 The first statement is \cite[Remark 8.1.2]{chn}; note that $\Forg(\alpha)=\beta^4$.
 The second statement follows from \cite[Corollary 8.1.7]{chn} and \cite[Remark 8.1.8]{chn}.
 The absoluteness statement 3 is \cite[Proposition 8.2.1]{chn}.
 Statement 4 follows from \cite[Theorem 8.4.2]{chn} and statement 5 can be deduced from \cite[Proposition 8.3.1]{chn}.
 Statement 6 follows from \cite[Proposition B.2.2]{9.II} and \cite[Corollary 8.1.5]{chn}.
 \end{proof}

Theorem~\ref{thm:chn} relies on joint work with six further colleagues, which is partly in progress, and partly available \cite{9.I, 9.II, 9.III}.
The long list of properties is still not sufficient to transfer the known arguments for slice computations of hermitian $K$-theory over fields of characteristic zero to the more general setting.
Some geometry for an effectivity statement is required.
To state the geometric description, let $\HGr_S$ denote the infinite quaternionic Grassmannian over $S$ constructed in \cite{panin-walter.quat-borel}. 

\begin{theorem}[\cite{kumar.thesis}]\label{thm:kumar}
Let $S$ be a qcqs scheme. 
There exists a motivic spectrum $\KQ_S^\prime\in \SH(S)$ with the following properties.
\begin{enumerate}
    \item[$6^\prime$] If $S$ is regular and $2$ is invertible on $S$, $\KQ_S$ represents Karoubi's hermitian $K$-groups over $S$.
    \item[$7^\prime$] There exists a structure map $\Sigma^{4+(4)}\Z\times \HGr_S \to \Z\times \HGr_S$ of pointed motivic spaces over $S$ such that $\Sigma^{2+(2)}\KQ_S^\prime$ is the motivic spectrum associated to the Bousfield-Friedlander type $\Sigma^{4+(4)}$-spectrum $(\Z\times \HGr_S,\Z\times \HGr_S,\dotsc)$ obtained from this structure map.
\end{enumerate}
In particular, $\KQ^\prime_S$ is cellular.
\end{theorem}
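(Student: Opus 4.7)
The plan is to follow the construction in \cite{kumar.thesis}. The central geometric input is Panin-Walter's infinite quaternionic Grassmannian $\HGr_S$ from \cite{panin-walter.quat-borel}, a filtered colimit of smooth projective $S$-schemes classifying symplectic subbundles of hyperbolic spaces, together with its $H$-space structure coming from orthogonal direct sum of symplectic bundles. The pointed motivic space $\Z\times \HGr_S$, with the factor $\Z$ tracking the rank in the manner familiar from the $\KGL$ construction, will serve as the constant level terms of a $\Sigma^{4+(4)}$-spectrum.

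The core step is the construction of the structure map in $7^\prime$. Via Panin-Walter's motivic equivalence $\HP^1_S\simeq S^{4+(4)}$, specifying a map $\Sigma^{4+(4)}(\Z\times\HGr_S)\to \Z\times\HGr_S$ amounts to specifying a pointed map $\HP^1_S\smash (\Z\times\HGr_S)\to \Z\times\HGr_S$. One constructs such a map $\sigma$ geometrically, sending a pair consisting of a rank-one symplectic subbundle classified by a point of $\HP^1_S$ and a virtual symplectic bundle classified by a point of $\Z\times\HGr_S$ to their orthogonal direct sum; this is realised explicitly via Plücker-type embeddings of symplectic Grassmannians. Forming the constant Bousfield-Friedlander $\Sigma^{4+(4)}$-spectrum whose level terms are all $\Z\times\HGr_S$ and whose structure maps are all equal to $\sigma$ defines $\Sigma^{2+(2)}\KQ^\prime_S$, hence $\KQ^\prime_S\in\SH(S)$. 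Cellularity then follows from the quaternionic Schubert-cell decomposition of $\HGr_S$ established in \cite{panin-walter.quat-borel}, which exhibits $\HGr_S$ as built out of motivic spheres $S^{4k+(4k)}$.

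For Property $6^\prime$, assume $S$ is regular with $2$ invertible. Panin-Walter show that $\Z\times\HGr_S$ represents symplectic $K$-theory of smooth $S$-schemes in degree zero, and Karoubi's hermitian $K$-groups are related to symplectic $K$-theory through the equivalence $\KSp\simeq \Omega^{(1)}\KQ$. Combining this with Hornbostel's representability result \cite{hornbostel} for hermitian $K$-theory in the classical setting yields the claimed identification of the homotopy groups of $\KQ^\prime_S$ with Karoubi's hermitian $K$-groups in all relevant bidegrees.

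The step I expect to be the most delicate is ensuring that the geometric map $\sigma$ realises exactly the hermitian $K$-theoretic Bott element, so that the assembled spectrum has the expected $\Sigma^{4+(4)}$-periodicity and its homotopy type matches Hornbostel's construction. This comparison is carried out through an explicit computation on $\HP^1_S$ using Panin-Walter's symplectic orientation and their description of $\KQ^{\ast+(\star)}(\HGr_S)$, which pins $\sigma$ down up to the required coherence and supplies the desired periodicity on the nose.
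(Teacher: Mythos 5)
The paper does not actually reprove this theorem: it is quoted from \cite{kumar.thesis} and \cite{kumar}, with only the added remark that cellularity follows as in \cite{rso.cell}. Measured against the construction those references carry out, your reconstruction contains two substantive errors.

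First, the degree of the quaternionic projective line is wrong. Panin--Walter's equivalence is $\HP^1/\HP^0\simeq T^{\wedge 2}=S^{2+(2)}$ (this is exactly the identification ``$T^2\simeq\HP^{1+}$'' invoked later in the paper), not $\HP^1\simeq S^{4+(4)}$. A $\Sigma^{4+(4)}$-structure map therefore corresponds to a map out of $(\HP^1/\HP^0)^{\wedge 2}\wedge(\Z\times\HGr_S)$, i.e.\ it involves \emph{two} copies of the quaternionic projective line; this reflects the fact that each $T$-desuspension shifts the symmetry of the forms by one, so four $T$-suspensions are needed to return from symplectic forms to symplectic forms. Building the spectrum on a single $\HP^1$ as you propose would produce a spectrum with periodicity $(4,2)$ rather than the correct $(8,4)$.

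Second, orthogonal direct sum cannot serve as the structure map. Direct sum is the additive $H$-space structure on $\Z\times\HGr_S$; on represented functors it merely translates a class in $\KSp_0$ by the class of the tautological bundle, and it does not even descend to the smash product $\HP^1\wedge(\Z\times\HGr_S)$, since it is neither reduced nor bilinear. The structure map has to realise the quaternionic projective bundle theorem, i.e.\ it is cup product with (a power of) the Bott class $b_1$, the reduced tautological symplectic bundle on $\HP^1$ --- exactly as the Bott map for $\KGL$ is multiplication by $[\mathcal{O}(1)]-1$ rather than direct sum with $\mathcal{O}(1)$. Relatedly, in your argument for $6^\prime$ the shift is off: the symplectic $K$-theory space is $\Omega^{\infty+(\infty)}\Sigma^{2+(2)}\KQ$ (the theory $\GW^{[2]}$), not $\Omega^{(1)}\KQ$. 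Your overall architecture --- constant Bousfield--Friedlander $\Sigma^{4+(4)}$-spectrum on $\Z\times\HGr_S$, cellularity from the quaternionic Schubert-cell structure, comparison with Hornbostel's spectrum when $2$ is invertible --- agrees with the cited construction, but with the structure map as you describe it the construction would not yield a spectrum with the stated properties.
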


The cellularity basically follows as in \cite{rso.cell}.
It is worth remarking that other work towards hermitian $K$-theory over schemes in which $2$ is not necessarily invertible existed \cite{zbMATH07124513, spitzweck.herm}.
Also the ring structure has been addressed already in \cite{lopez.thesis}, but is not
easily transferrable to the geometric version $\KQ^\prime$.

\begin{theorem}\label{thm:chn-kumar}
Let $S$ be a qcqs scheme.
There exists an equivalence $\KQ_S^\prime \to \KQ_S$.
\end{theorem}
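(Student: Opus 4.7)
The plan is to reduce to the universal case $S=\Spec(\Z)$ via absoluteness, construct the comparison map on quaternionic Grassmannians using Panin-Walter's computation of $\KQ^\ast(\HGr_\Z)$, and show that the resulting map of spectra is an equivalence using the recent Schlichting comparison between the generalized Grothendieck-Witt groups of \cite{9.III} and Karoubi's classical hermitian $K$-theory on polynomial rings over fields. The reduction is immediate: $\KQ_S^\prime$ is absolute by its geometric construction, since both $\HGr_\Z$ and the $\Sigma^{4+(4)}$-structure map of Theorem~\ref{thm:kumar}(7$^\prime$) are defined over $\Z$, while $\KQ_S$ is absolute by Theorem~\ref{thm:chn}(3). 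Hence any equivalence $\KQ_\Z^\prime\simeq\KQ_\Z$ pulls back along $S\to\Spec(\Z)$ to the desired equivalence for arbitrary qcqs $S$.

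Over $\Z$, the first step is to construct a map of pointed motivic spaces
\[
\phi\colon \Z\times\HGr_\Z \to \Omega^{\infty+(\infty)}\Sigma^{2+(2)}\KQ_\Z.
\]
The Panin-Walter description of $\KQ^{\ast+(\star)}(\HGr_\Z)$ as a formal power series ring in Pontryagin/Borel classes of the tautological symplectic bundle \cite{panin-walter.quat-borel,panin-walter.msp} provides a canonical universal class in $\KQ^{2+(2)}(\HGr_\Z)$, and together with the rank class on the $\Z$-factor this determines $\phi$. To promote $\phi$ to a map of spectra $\KQ_\Z^\prime\to\KQ_\Z$ one uses the Bousfield-Friedlander description of Theorem~\ref{thm:kumar}(7$^\prime$): the task becomes to check compatibility of $\phi$ with the geometric $\Sigma^{4+(4)}$-structure map of $\KQ^\prime_\Z$ and with the periodicity element $\alpha$ of Theorem~\ref{thm:chn}(1). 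By the Panin-Walter presentation both composites live in the same explicit summand of $\KQ^{6+(6)}(\Z\times\HGr_\Z)$, so this reduces to matching their leading coefficients.

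The final step is to prove that $\phi\colon \KQ_\Z^\prime\to\KQ_\Z$ is an equivalence. Both sides are cellular, so it suffices to check isomorphism on bigraded homotopy groups evaluated on smooth affine test schemes, which by $\alpha$-periodicity is confined to a bounded range of weights. On $F[x_1,\dots,x_n]$ for any field $F$, both $\KQ^\prime_F$ and $\KQ_F$ compute Karoubi's classical hermitian $K$-theory in positive degrees: for $\KQ^\prime_F$ from Theorem~\ref{thm:kumar}(6$^\prime$) when $2\in F^\times$ and from the explicit geometric construction otherwise, and for $\KQ_F$ from Schlichting's comparison \cite{sch.sym.v.gensym} with the generalized Grothendieck-Witt groups of \cite{9.III}. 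Running this check at all residue fields of $\Spec(\Z)$ and combining with cellularity yields the equivalence over $\Z$.

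The hard part will be this final equivalence, especially at the prime $2$. The construction and promotion of $\phi$ are universal computations in a cohomology group Panin-Walter describe completely, and absoluteness is built in. What is not directly available from classical sources is the agreement of the new Grothendieck-Witt formalism of \cite{9.I,9.II,9.III} underlying $\KQ$ with Karoubi hermitian $K$-theory over fields of characteristic $2$; Schlichting's comparison on polynomial algebras is the crucial bridge between $\KQ$ and the geometric, classical $\KQ^\prime$ on enough test objects to conclude the equivalence of cellular spectra.
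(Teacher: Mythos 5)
Your overall strategy matches the paper's: reduce to $\Spec(\Z)$ by absoluteness, build $\phi\colon\Z\times\HGr_\Z\to\Omega^{\infty+(\infty)}\Sigma^{2+(2)}\KQ_\Z$ from the Panin--Walter computation of $\KQ^{\ast+(\star)}(\HGr_\Z)$, check compatibility with the $\Sigma^{4+(4)}$-structure maps inside the power-series ring (the paper pins this down by injecting the relevant coefficient groups $\KQ^{0+(0)}(\Z)$ and $\KQ^{2+(2)}(\Z)$ into their counterparts over $\R$, where everything is classical Panin--Walter), and then use the comparison of the \cite{9.III} Grothendieck--Witt theory with classical hermitian $K$-theory on polynomial rings over fields, with Quillen--Suslin entering at characteristic $2$. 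Two smaller omissions: you do not address how a sequence of level maps commuting only up to homotopy is rigidified to an actual map of spectra (the paper invokes the Milnor $\lim^1$-sequence of \cite{ppr}), and your "matching leading coefficients" step is exactly where the reduction to $\R$ does the work.

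The genuine gap is in your final step. First, you assert that "both sides are cellular," but cellularity of the Calm\`es--Harpaz--Nardin spectrum $\KQ_\Z$ is not known a priori --- it is a \emph{consequence} of this theorem (indeed the point of the paper's title), so a Whitehead-type argument on bigraded homotopy groups is circular. The paper instead argues unstably: it compares the colimit diagrams of infinite loop spaces defining $\Omega^{\infty+(\infty)}\Sigma^{2+(2)}$ of both spectra and shows the vertical maps $\Omega^{4n+(4n)}f_n$ are equivalences for $n>0$. Second, and more seriously, the levelwise comparison is \emph{not} an equivalence: what $\Z\times\HGr_\Z$ computes is $\Sing^{\A^1}\Omega^\infty\KSp$ (genuine $(-1)$-even forms), and over a field of characteristic $2$ the map to homotopy $(-1)$-symmetric forms is only injective on $\pi_0$ --- it is multiplication by $2$ on $\Z$ (see the Remark following Lemma~\ref{lem:KQ-BSp}). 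Your phrase "in positive degrees" silently concedes this, but you give no mechanism for dealing with the $\pi_0$-failure; the paper's resolution is precisely that the structure maps are $4$-fold loop maps, so the $\pi_0$-defect is erased in the colimit over loopings, and that the analysis can be localized at $\FF_2$ via the $j_\sharp j^\ast\to\mathrm{id}\to i_\ast i^\ast$ cofiber sequence together with Morel's strict $\A^1$-invariance to reduce to fields. Without the cellularity claim and without the telescope argument, your final step does not close.
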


\begin{proof}
  Since both motivic spectra pull back by Theorem~\ref{thm:chn} and Theorem~\ref{thm:kumar}, it suffices to construct an equivalence $f\colon \KQ^\prime_\Z\to \KQ_\Z$.
  Its main ingredient, a map
  \[ \phi\colon \Z\times \HGr_\Z\xrightarrow{} \Omega^{\infty+(\infty)}\Sigma^{2+(2)}\KQ_\Z\]
  of motivic spaces over $\Z$, is given in Lemma~\ref{lem:KQ-BSp} below.
  Via the explicit periodicity equivalence $\Sigma^{4+(4)}\KQ_\Z\simeq \KQ_\Z$, there
  results a map  
  \[ \phi_n\colon \Z\times \HGr_\Z\xrightarrow{\simeq} \Omega^{\infty+(\infty)}\Sigma^{4n+2+(4n+2)}\KQ_\Z\]
  of motivic spaces over $\Z$ for every natural number $n$. 
  That these maps define a map of motivic spectra requires the diagram
  \begin{equation}\label{eq:structure-map}
    \begin{tikzcd}
      \Sigma^{4+(4)} \Z\times \HGr_\Z \ar[r,"\Sigma^{4+(4)}\phi_n"] \ar[d,"\alpha"] &
      \Sigma^{4+(4)}\Omega^{\infty+(\infty)}\Sigma^{4n+2+(4n+2)}\KQ_\Z \ar[d] \\
      \Z\times \HGr_\Z \ar[r,"\phi_{n+1}"] & \Omega^{\infty+(\infty)}\Sigma^{4n+6+(4n+6)}\KQ_\Z
    \end{tikzcd}
  \end{equation}
  where the vertical arrows depict the respective structure maps of the motivic spectra 
  to commute for every $n\in \N$.
  Using the adjunction $(\Sigma^{4+(4)},\Omega^{4+(4)})$, each of the two paths in diagram~(\ref{eq:structure-map}) defines a homotopy class
  $\Z\times \HGr_\Z\to \Omega^{\infty+(\infty)}\Sigma^{4n+2+(4n+2)}\KQ_\Z$.
  To conclude that these coincide, observe that the Thom isomorphism \cite[Proposition 8.3.1]{chn} for $\KQ_{\Z}$ provides, 
  for every special linear vector bundle $V\to X$ of rank $r$ with $X$ smooth over $\Spec(\Z)$, 
  a class
  \[ \mathrm{th}(V\to X)\in \KQ^{r+(r)}(\Sigma^{\infty+(\infty)}\Th(V))\cong \KQ^{0+(0)}(X)\]
  corresponding to the unit in the latter ring.
  It is straightforward to check that these classes constitute a normalized $\mathrm{SL}$-orientation on $\KQ_{\Z}$ in the sense of \cite[Section~5]{panin-walter.msp}.
  Hence the motivic spectrum $\KQ_{\Z}$ obtains a symplectic orientation, 
  that is, a map $\mathsf{MSp}_{\Z}\to \KQ_{\Z}$ which is a homomorphism of commutative monoids in the 
  homotopy category; see \cite[Theorem 1.1]{panin-walter.msp}.
  
  The theory of Borel classes associated with the symplectic orientation on $\KQ_S$ supplies an isomorphism 
  \[ \KQ_{S}^{s+(w)}(\HGr)\cong \bigl(\KQ^{\ast+(\star)}_{S}(S)\llbracket b_1,b_2,\dotsc\rrbracket\bigr)^{s+(w)}\]
  where $b_j\in \KQ_S^{2j+(2j)}(\HGr_S)$, see \cite[Theorem 11.4]{panin-walter.quat-borel}. This holds in particular for $S=\Spec(\Z)$.
  Thus the two homotopy classes under consideration can be identified with
  elements in the group
  \[ \KQ_{\Z}^{4n+2+(4n+2)}(\HGr_\Z)\cong \bigl(\KQ^{\ast+(\star)}({\Z})\llbracket b_1,b_2,\dotsc\rrbracket\bigr)^{4n+2+(4n+2)}\]
  where $b_j\in \KQ_\Z^{2j+(2j)}(\HGr_\Z)$.
  It follows that the respective power series coefficients for both of the
  homotopy classes reside in the groups $\KQ^{2m+(2m)}(\Z)$, which by
  periodicity reduce to the groups $\KQ^{0+(0)}(\Z)$ and $\KQ^{2+(2)}(\Z)$.
  Since the inclusion $\Z\hookrightarrow \R$ induces an injection (even an
  isomorphism) on these groups, the commutativity of diagram~(\ref{eq:structure-map})
  up to homotopy follows from the respective commutativity over $\R$.
  The latter follows from the fact that $\KQ_\R$ and $\KQ^\prime_\R$ are equivalent
  to the motivic spectrum representing Karoubi's hermitian $K$-groups of smooth
  $\R$-varieties, as listed in Theorem~\ref{thm:chn} and Theorem~\ref{thm:kumar}.
  Since $2$ is invertible in $\R$, $\phi_n$ is mapped to $\tau_{4n+2}$ in Section 12 of \cite{panin-walter.bo} under the equivalence $T^2\simeq \HP^{1+}$.
  As a consequence, diagram~(\ref{eq:structure-map}) commutes up to homotopy
  for every $n$.

  To provide an actual map of motivic spectra, commuting up to homotopy does not suffice.
  However, the argument for the commutativity up to homotopy shows that the 
  sequence $(\phi_n)$ is an element
  in the group $\lim\limits_{n\to \infty} \KQ^{4n+2+(4n+2)}(\Z\times \HGr)$.
  This group sits in a short exact sequence
  \[ 0 \to {\lim_n}^1\KQ^{4n+2+(4n+2)}(\Z\times \HGr) \to [\KQ^\prime,\KQ]_{\SH(\Z)} \to \lim\limits_{n\to \infty}\KQ^{4n+2+(4n+2)}(\Z\times \HGr) \to 0 \]
  by \cite[Lemma 1.3.3]{ppr}, where the middle group represents the set of
  maps from $\KQ^\prime$ to $\KQ$ in the motivic stable homotopy category $\SH(\Z)$.
  Hence there exists a lift $f\colon \KQ^\prime_\Z \to \KQ_\Z$ of the sequence $(\phi_n)$,
  that is, a map $f$ of motivic spectra whose level $f_n\colon \Z\times \HGr \to \KQ_{4n+2}$ coincides with $\phi_n$ up to homotopy.
  (In this situation, the $\lim^1$-term vanishes, which is not relevant for the existence.)
  To prove that this map is an equivalence, consider the diagram
  \begin{equation}\label{eq:uivalence} \begin{tikzcd}
      \Z\times \HGr_\Z \ar[r] \ar[d,"f_0"] & \Omega^{4+(4)}\Z\times \HGr_\Z \ar[r] \ar[d,"\Omega^{4+(4)}f_1"] & \dotsc \ar[r]  & \Omega^{\infty +(\infty)} \Sigma^{2+(2)}\KQ^\prime _\Z \ar[d] \\
      \Omega^{\infty+(\infty)}\Sigma^{2+(2)}\KQ_\Z \ar[r] & \Omega^{4+\infty+(4+\infty)}\Sigma^{6+(6)}\KQ_\Z \ar[r] & \dotsc \ar[r] & \Omega^{\infty+(\infty)}\Sigma^{2+(2)}\KQ_\Z
    \end{tikzcd} 
  \end{equation}
  whose horizontal maps are induced by the structure maps of the motivic spectra $\KQ^\prime_\Z$ and $\KQ_\Z$, respectively, and which ends with the respective colimit on the right hand side.
  Diagram~(\ref{eq:uivalence}) commutes, since so does diagram~(\ref{eq:structure-map}). 
  The bottom horizontal maps in diagram~(\ref{eq:uivalence}) are all equivalences of motivic spaces by Theorem~\ref{thm:chn}.
  Lemma~\ref{lem:KQ-BSp} implies that the vertical map $\Omega^{4n+(4n)}(f_n)$ is an equivalence of motivic spaces for $n>0$.
  Hence the map induced on the colimit is an equivalence of motivic spaces.
  It follows that the map $f\colon \KQ^\prime_\Z\to \KQ_\Z$ is an equivalence.
\end{proof}

\begin{lemma}\label{lem:KQ-BSp}
  Let $\HGr_\Z$ be the infinite quaternionic Grassmannian over $\Z$.
  There exists a map
  \[ \phi\colon \Z\times \HGr_\Z \to \Omega^{\infty+(\infty)}\Sigma^{2+(2)}\KQ_\Z\]
  of motivic spaces inducing an injection on $\pi_0$ and isomorphisms on $\pi_n$
  for every $0<n\in \N$. 
\end{lemma}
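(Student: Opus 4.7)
The plan is to construct $\phi$ using the universal symplectic bundle on $\HGr_\Z$ and then verify the claimed behavior on motivic homotopy sheaves by reducing to polynomial algebras over fields, where the two available models of hermitian $K$-theory have been shown to agree in positive degrees.

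\textbf{Construction of $\phi$.} The infinite quaternionic Grassmannian $\HGr_\Z$ carries the tautological symplectic subbundle whose canonical non-degenerate alternating form provides, for every smooth affine $\Z$-scheme $X$, a natural class in the symplectic $K$-theory spectrum $\KSp(X)$. Under the $4$-periodicity of $\KQ_\Z$, the shifted spectrum $\Sigma^{2+(2)}\KQ_\Z$ corresponds to the symplectic component of hermitian $K$-theory, so this class delivers a map $\HGr_\Z(X)\to (\Omega^{\infty+(\infty)}\Sigma^{2+(2)}\KQ_\Z)(X)$ natural in $X$. Combining it with the rank map on the $\Z$-factor of the source assembles these into $\phi$.

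\textbf{Reduction to polynomial rings over fields.} By Morel's theorem, the motivic homotopy sheaves $\pi_n$ for $n>0$ are strictly $\A^1$-invariant Nisnevich sheaves, hence determined by their values on finitely generated field extensions of $\Z$, which by $\A^1$-invariance agree with the values on polynomial algebras $R=k[t_1,\dotsc,t_m]$. It therefore suffices to check that $\phi(R)$ induces an injection on $\pi_0$ and an isomorphism on $\pi_n$ for $n>0$, for every field $k$ and every $m\geq 0$.

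\textbf{Identifying both sides.} On the source, Panin-Walter's analysis of $\HGr$ identifies $\pi_n(\Z\times \HGr_\Z)(R)$ with $\Z$-enriched symplectic $K$-theory $\KSp_n(R)$ in all degrees $n\geq 0$. On the target, $\pi_n(\Omega^{\infty+(\infty)}\Sigma^{2+(2)}\KQ_\Z)(R)$ is a shifted Grothendieck-Witt group of $R$ in the sense of \cite{chn}. The second main input recalled in the introduction, due to Schlichting \cite{sch.sym.v.gensym} and also extractable from \cite{9.III}, asserts that this Grothendieck-Witt group of a polynomial algebra over a field coincides in positive degrees with Karoubi's classical symplectic $K$-group. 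Since $\phi$ is built from the universal symplectic bundle, it realizes precisely this comparison; the isomorphism on $\pi_n$ for $n>0$ follows. On $\pi_0$ the map is injective because the classical symplectic Grothendieck group embeds into its hermitian analogue, while the target in general carries additional classes coming from symmetric bilinear forms.

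\textbf{Main obstacle.} The non-routine ingredient is the positive-degree agreement between the hermitian $K$-theory of \cite{chn} and Karoubi's classical symplectic $K$-theory on polynomial rings over fields, especially in characteristic $2$; this is Schlichting's recent comparison. The remainder is a naturality check tracking the tautological symplectic bundle through the two descriptions.
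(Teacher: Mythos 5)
Your construction of $\phi$ is in the right spirit (the paper pins it down as the element $(i+b_1)_{i\in\Z}$ in $\KQ_\Z^{2+(2)}(\HGr)\cong\KSp_0(\Z)\oplus\Z\{b_1\}\oplus\dotsb$, computed via the Panin--Walter symplectic orientation and Borel classes, and the comparison map $\KSp\to\Omega^{(\infty)}\Sigma^{2+(2)}\KQ$ from genuine $(-1)$-even to homotopy $(-1)$-symmetric forms is exactly the map of presheaves of spectra you implicitly need). The endgame --- evaluate on the polynomial rings $\Delta^d_F$ and invoke the agreement of the Grothendieck--Witt groups of \cite{9.III} with classical symplectic $K$-theory in positive degrees --- also matches the paper, although the paper is more careful here: \cite[Corollary~1.3.9]{9.III} a priori only gives injectivity for $n\geq d$ and bijectivity for $n>d$ with $d$ the Krull dimension, and one needs Quillen--Suslin (all projectives over $F[t_1,\dots,t_d]$ are free, so the duality interacts with the $t$-structure as over a field) to remove the dimension dependence; citing Schlichting's preprint papers over this but you should be aware the uniformity in $d$ is the delicate point.

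The genuine gap is your reduction step. You apply Morel's strict $\A^1$-invariance theorem directly over the base $\Spec(\Z)$ to conclude that the homotopy sheaves are determined by their values on fields. Morel's theorem \cite[Corollary~6.2.9]{morel.stable-conn} is a theorem over a (perfect) field; it is not available over $\Spec(\Z)$, and this is precisely the difficulty the lemma has to confront. The paper instead uses the localization cofiber sequence $j_\sharp j^\ast\to\mathrm{Id}\to i_\ast i^\ast$ for the closed immersion $i\colon\Spec(\FF_2)\hookrightarrow\Spec(\Z)$ with open complement $j\colon\Spec(\Z[2^{-1}])\hookrightarrow\Spec(\Z)$. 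Over $\Z[2^{-1}]$, where $2$ is invertible, $\phi$ agrees with the Panin--Walter equivalence $\Z\times\HGr\simeq\Omega^\infty\KSp$ (this requires checking that the $\SL$-orientation, hence the Borel classes, are stable under base change), so $j^\ast\phi$ is an equivalence outright. Only after this does one reduce to $i^\ast\phi$ over the perfect field $\FF_2$, where Morel's theorem legitimately applies and the polynomial-ring comparison in characteristic $2$ is needed. Without this decomposition your argument has no way to get from the base $\Spec(\Z)$ to fields, so the reduction as written does not go through.
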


\begin{proof}
  To construct $\phi$, the theory of Borel classes associated with the symplectic orientation on $\KQ_S$, as explained in the proof of Theorem~\ref{thm:chn-kumar}, will be used.
  This results in an isomorphism 
  \[ \KQ_{S}^{s+(w)}(\HGr)\cong \bigl(\KQ^{\ast+(\star)}_{S}(S)\llbracket b_1,b_2,\dotsc\rrbracket\bigr)^{s+(w)}\]
  where $b_j\in \KQ_S^{2j+(2j)}(\HGr_S)$, see \cite[Theorem 11.4]{panin-walter.quat-borel}. This holds in particular for $S=\Spec(\Z)$. By \cite[Section 3.2]{9.III} and \cite[Corollary 8.1.5]{chn} there is an inclusion of groups
  \[ \Z \oplus\Z\cong \KSp_0(\Z)\oplus\Z\{b_1\}\to \KQ_{\Z}^{2+(2)}(\HGr) \]  
  where $\KQ^{2+(2)}_{\Z}(\Z)\cong \KSp_0(\Z)\cong \Z$ is generated by the hyperbolic form of rank $2$ over $\Z$.
  Let $\phi\colon \Z\times \HGr_\Z \to \Omega^{\infty+(\infty)}\Sigma^{2+(2)}\KQ_\Z $ be the morphism given by the sequence of elements $(i+b_1)_{i\in \Z}$  in $\KSp_0(\Z)\oplus\Z \{b_1\}$.
  This provides a unique element in $[\Z\times\HGr_\Z, \Omega^{\infty+(\infty)}\Sigma^{2+(2)}\KQ_\Z ]$, the set of maps in the unstable $\A^1$-homotopy category $\mathbf{Ho}(\Z)$ of $\Z$.

  Having produced $\phi\colon \Z\times \HGr_{\Z}\to \Omega^{\infty+(\infty)}\Sigma^{2+(2)}\KQ_{\Z}$, it remains to prove the assertion on homotopy sheaves.
  For this purpose, consider the map of homotopy presheaves $X\mapsto [\Sigma^n X_+,\phi]$ inducing the map on homotopy sheaves.
  The domain $[\Sigma^n X_+,\Z\times \HGr_{\Z}]$ of this map is naturally identified with $\pi_n\Sing^{\A^1}\Omega^\infty\KSp(X)$ in \cite[Theorem 3.5]{kumar}.
  The map $\phi$ is then induced by the canonical map 
  \[ \phi(X)\colon \Sing^{\A^1}\KSp(X) \to \Sing^{\A^1}\Omega^{(\infty)}\Sigma^{2+(2)}\KQ_{\Z}(X)\simeq \Omega^{(\infty)}\Sigma^{2+(2)}\KQ_{\Z}(X)\]
  of spectra comparing genuine $(-1)$-even to (homotopy) $(-1)$-symmetric forms, the latter being $\A^1$-invariant \cite[Theorem 6.3.1]{chn}.
  Hence $\phi$ arises as the evaluation of a map of presheaves with values in spectra.
  Let $i\colon \Spec(\mathbb{F}_2)\hookrightarrow \Spec(\Z)$ denote the closed embedding with open complement $j\colon \Spec(\Z[2^{-1}])\hookrightarrow \Spec(Z)$.
  The $S^1$-stabilization
  \[ j_\sharp j^\ast \to \mathrm{Id}_{\mathbf{Ho}(\Z)} \to i_\ast i^\ast\to \Sigma j_\sharp j^\ast\] 
  of the unstable localization cofiber sequence from \cite{Morel-Voevodsky}
  applies to give a natural transformation of long exact sequences
  \[ \dotsm \to [\Sigma^nX_+,j_\sharp j^\ast \phi] \to[\Sigma^nX_+,\phi] \to[\Sigma^nX_+,i_\ast i^\ast \phi] \to [\Sigma^{n-1}X_+,j_\sharp j^\ast \phi] \to \dotsm. \]
  By parts 3 and 5 of Theorem \ref{thm:chn}, for any $f\colon R\to S$ and any vector bundle $V\to S$ the square
  \[
  \begin{tikzcd}
    f^*\KQ_{S}\arrow[r,"\simeq" ]\arrow[d,"\simeq"] &\KQ_R\arrow[d,"\simeq"]\\
    f^*\Th(V)\smash f^*\KQ_{S}(\det V[-r])\arrow[r,"\simeq"] &\Th(f^*V)\smash \KQ_{R}(\det f^*V[-r])
  \end{tikzcd} \] 
  commutes.
  This implies that the corresponding $\SL$-orientation is stable under base change.
  In particular, the Borel classes are compatible with pullbacks.
  Our definition of $\phi$ is the same as Panin and Walter's identification $\Z\times\HGr\simeq \Omega^\infty\KSp$ when $2$ is invertible; see \cite[Section 11]{panin-walter.bo}.
  The compatibility of $\phi$ guaranteed above implies that $j^\ast\phi$ is an equivalence.
  Hence so is $j_\sharp j^\ast \phi$.
  It follows that the required connectivity for $\phi$ holds if it holds for $i^\ast \phi$. 
  Having placed ourselves in the $S^1$-stable $\A^1$-homotopy category over the
  perfect field $\FF_2$, these
  $\A^1$-homotopy sheaves are strictly $\A^1$-invariant by a theorem of Morel \cite[Corollary~6.2.9]{morel.stable-conn}.
  For these, it suffices to evaluate on fields $F$ over $\FF_2$.
  The map
  \[ i^\ast\phi(F)\colon \Sing^{\A^1}\KSp(F) \to \Sing^{\A^1}\Omega^{(\infty)}\Sigma^{2+(2)}\KQ_{\FF_2}(F)\]
  is induced by the map 
  \[ i^\ast\phi(\Delta^d_F)\colon \KSp(\Delta^d_F) \to \Omega^{(\infty)}\Sigma^{2+(2)}\KQ_{\FF_2}(\Delta^d_F)\]
  where $\Delta^d_F=\Spec F[t_0,\dotsc,t_d]/(\sum t_j=1)$.
  Its target is equivalent to $\Omega^{(\infty)}\Sigma^{2+(2)}\KQ_{\FF_2}(F)$ by
  $\A^1$-invariance for $\KQ$. 
  Since $\Delta^d_F$ has  Krull dimension $d$, \cite[Corollary 1.3.9]{9.III} for $r=1$ implies that $\pi_ni^\ast \phi(\Delta^d_F)$ is injective for $n\geq d$ and bijective for $n>d$.
  However, the arguments of \cite[Section 1.3]{9.III} give more in this special situation.
  The relevant base ring $R$ is a polynomial ring in $d$ variables over the field $F$, with
  trivial involution and canonical duality.
  Every finitely generated projective $R$-module is free, by the solution of Serre's conjecture \cite{quillen.proj,suslin.sl}.
  Hence the duality on the resulting derived category of finitely generated
  projective $R$-modules interacts with the $t$-structure in the same way as in the
  case of the field $F$; see \cite[p. 27, Remark 1.18]{9.III}.
  Thus $\pi_ni^\ast \phi(\Delta^d_F)$ is injective for $n\geq 0$ and bijective for $n>0$,
  for every $d$. The same property then holds after realization, which shows that
  \[ i^\ast\phi(F)\colon \Sing^{\A^1}\KSp(F) \to \Sing^{\A^1}\Omega^{(\infty)}\Sigma^{2+(2)}\KQ_{\FF_2}(F)\]
  has the desired connectivity property. This concludes the proof. 
\end{proof}

\begin{remark}
  The map $\pi_0i^*\phi$ above fails to be surjective.
  For $F$ a perfect field of characteristic $2$, genuine $(1)$-symmetric and $(-1)$-symmetric forms are equivalent, and every unit is a square.
  Thus
  \[\pi_0\Omega^{\infty+(\infty)}\Sigma^{2+(2)}\KQ_{\FF_2}(F)\cong \GW_0^0(F)\cong \Z\]
  and the morphism $\pi_0i^\ast\phi(F)\colon \KSp_0(F)\to \GW_0^0(F)$ is the map $2\colon \Z\to\Z$; see \cite[Remark 5.8]{hjny}.
	
\end{remark}	
\newcommand{\etalchar}[1]{$^{#1}$}


\begin{thebibliography}{CDH{\etalchar{+}}20b}

\bibitem[CDH{\etalchar{+}}20a]{9.II}
Baptiste Calm{\`e}s, Emanuele Dotto, Yonatan Harpaz, Fabian Hebestreit, Markus
  Land, Kristian Moi, Denis Nardin, Thomas Nikolaus, and Wolfgang Steimle.
\newblock Hermitian {K}-theory for stable ${{\infty}}$-categories {II}:
  {Cobordism} categories and additivity.
\newblock Preprint, {arXiv}:2009.07224, 2020.

\bibitem[CDH{\etalchar{+}}20b]{9.III}
Baptiste Calm{\`e}s, Emanuele Dotto, Yonatan Harpaz, Fabian Hebestreit, Markus
  Land, Kristian Moi, Denis Nardin, Thomas Nikolaus, and Wolfgang Steimle.
\newblock Hermitian {K}-theory for stable ${{\infty}}$-categories {III}:
  {Grothendieck}-{Witt} groups of rings.
\newblock Preprint, {arXiv}:2009.07225, 2020.

\bibitem[CDH{\etalchar{+}}23]{9.I}
Baptiste Calm{\`e}s, Emanuele Dotto, Yonatan Harpaz, Fabian Hebestreit, Markus
  Land, Kristian Moi, Denis Nardin, Thomas Nikolaus, and Wolfgang Steimle.
\newblock Hermitian {K}-theory for stable {{\(\infty\)}}-categories. {I}:
  {Foundations}.
\newblock {\em Sel. Math., New Ser.}, 29(1):269, 2023.
\newblock Id/No 10.

\bibitem[CHN24]{chn}
Baptiste Calmès, Yonatan Harpaz, and Denis Nardin.
\newblock A motivic spectrum representing hermitian ${K}$-theory, 2024.
\newblock Preprint, arxiv:2402.15136.

\bibitem[DI05]{dugger-isaksen.cell}
Daniel Dugger and Daniel~C. Isaksen.
\newblock Motivic cell structures.
\newblock {\em Algebr. Geom. Topol.}, 5:615--652, 2005.

\bibitem[HJNY22]{hjny}
Marc Hoyois, Joachim Jelisiejew, Denis Nardin, and Maria Yakerson.
\newblock Hermitian {K}-theory via oriented {Gorenstein} algebras.
\newblock {\em J. Reine Angew. Math.}, 793:105--142, 2022.

\bibitem[Hor05]{hornbostel}
Jens Hornbostel.
\newblock {$\mathbb{A}^1$}-representability of {H}ermitian {$K$}-theory and
  {W}itt groups.
\newblock {\em Topology}, 44(3):661--687, 2005.

\bibitem[KR{\O}24]{kro.very}
H{\aa}kon Kolderup, Oliver R\"{o}ndigs, and Paul~Arne {\O}stv{\ae}r.
\newblock Motivic filtrations for hermitian {$K$}-theory over {D}edekind
  domains.
\newblock Preprint, 2024.

\bibitem[Kum20]{kumar.thesis}
K.~Arun Kumar.
\newblock {\em On the motivic spectrum {$BO$} and hermitian {$K$}-theory}.
\newblock PhD thesis, Universit{\"a}t Osnabr{\"u}ck, 2020.
\newblock
  https://osnadocs.ub.uni-osnabrueck.de/handle/urn:nbn:de:gbv:700-202011243776.

\bibitem[Kum24]{kumar}
K.~Arun Kumar.
\newblock Construction of the motivic cellular spectrum
  {{\(\mathbf{KO}^{geo}\)}} over {{\(Spec(\mathbb{Z})\)}}.
\newblock {\em New York J. Math.}, 30:323--350, 2024.

\bibitem[LA17]{lopez.thesis}
Alejo Lopez-Avila.
\newblock {\em {$E_{\infty}$}-ring structures in motivic hermitian
  {$K$}-theory}.
\newblock PhD thesis, Universit{\"a}t Osnabr{\"u}ck, 2017.
\newblock
  https://osnadocs.ub.uni-osnabrueck.de/handle/urn:nbn:de:gbv:700-2018030216685.

\bibitem[Mor05]{morel.stable-conn}
Fabien Morel.
\newblock The stable {{\(\mathbb{A}^1\)}}-connectivity theorems.
\newblock {\em \(K\)-Theory}, 35(1-2):1--68, 2005.

\bibitem[MV99]{Morel-Voevodsky}
Fabien Morel and Vladimir Voevodsky.
\newblock {{\(\mathbb{A}^1\)}}-homotopy theory of schemes.
\newblock {\em Publ. Math., Inst. Hautes {\'E}tud. Sci.}, 90:45--143, 1999.

\bibitem[PPR09]{ppr}
Ivan Panin, Konstantin Pimenov, and Oliver R\"{o}ndigs.
\newblock On {V}oevodsky's algebraic {$K$}-theory spectrum.
\newblock In {\em Algebraic topology}, volume~4 of {\em Abel Symp.}, pages
  279--330. Springer, Berlin, 2009.

\bibitem[PW19]{panin-walter.bo}
Ivan Panin and Charles Walter.
\newblock On the motivic commutative ring spectrum {{\(\mathbf{BO}\)}}.
\newblock {\em St. Petersbg. Math. J.}, 30(6):933--972, 2019.

\bibitem[PW22]{panin-walter.quat-borel}
Ivan Panin and Charles Walter.
\newblock Quaternionic {Grassmannians} and {Borel} classes in algebraic
  geometry.
\newblock {\em St. Petersbg. Math. J.}, 33(1):97--140, 2022.

\bibitem[PW23]{panin-walter.msp}
Ivan Panin and Charles Walter.
\newblock On the algebraic cobordism spectra {{\(\mathbf{MSL}\)}} and
  {{\(\mathbf{MSp} \)}}.
\newblock {\em St. Petersbg. Math. J.}, 34(1):109--141, 2023.

\bibitem[Qui76]{quillen.proj}
Daniel Quillen.
\newblock Projective modules over polynomial rings.
\newblock {\em Invent. Math.}, 36:167--171, 1976.

\bibitem[RS{\O}10]{rso.strict}
Oliver R\"{o}ndigs, Markus Spitzweck, and Paul~Arne {\O}stv{\ae}r.
\newblock Motivic strict ring models for {$K$}-theory.
\newblock {\em Proc. Amer. Math. Soc.}, 138(10):3509--3520, 2010.

\bibitem[RS{\O}19]{rso.cell}
Oliver R{\"o}ndigs, Markus Spitzweck, and Paul~Arne {\O}stv{\ae}r.
\newblock Cellularity of {Hermitian} {{\(K\)}}-theory and {Witt}-theory.
\newblock In {\em \(K\)-theory. Proceedings of the international colloquium,
  Mumbai, 2016}, pages 35--40. New Delhi: Hindustan Book Agency; Mumbai: Tata
  Institute of Fundamental Research, 2019.

\bibitem[Sch19]{zbMATH07124513}
Marco Schlichting.
\newblock Symplectic and orthogonal {{\(K\)}}-groups of the integers.
\newblock {\em C. R., Math., Acad. Sci. Paris}, 357(8):686--690, 2019.

\bibitem[Sch25]{sch.sym.v.gensym}
Marco Schlichting.
\newblock Symmetric versus genuine symmetric forms in {H}ermitian {$K$}-theory.
\newblock Preprint, {arXiv}:2503.14288, 2025.

\bibitem[Spi16]{spitzweck.herm}
Markus Spitzweck.
\newblock A {Grothendieck}-{Witt} space for stable infinity categories with
  duality.
\newblock Preprint, {arXiv}:1610.10044, 2016.

\bibitem[Sus77]{suslin.sl}
A.~A. Suslin.
\newblock On the structure of the special linear group over polynomial rings.
\newblock {\em Math. USSR, Izv.}, 11:221--238, 1977.

\bibitem[Voe98]{voevodsky.icm}
Vladimir Voevodsky.
\newblock {$\mathbb{A}^1$}-homotopy theory.
\newblock In {\em Proceedings of the {I}nternational {C}ongress of
  {M}athematicians, {V}ol. {I} ({B}erlin, 1998)}, number Extra Vol. I, pages
  579--604, 1998.

\end{thebibliography}
\end{document}